\newtheorem{lemma}{Lemma}
\newtheorem{theorem}{Theorem}
\newtheorem{corollary}{Corollary}
\begin{document}
\renewcommand{\refname}{References}
\renewcommand{\proofname}{Proof.}
\renewcommand{\figurename}{Fig.}

\newcommand{\rr}{\mathbb{R}}
\newcommand \nn {\mathbb{N}}
\newcommand \zz {\mathbb{Z}}
\newcommand \bbc {\mathbb{C}}
\newcommand \rd {\mathbb{R}^d}

 \newcommand {\al} {\alpha}
\newcommand {\be} {\beta}
\newcommand {\da} {\delta}
\newcommand {\Da} {\Delta}
\newcommand {\ga} {\gamma}
\newcommand {\Ga} {\Gamma}
\newcommand {\la} {\lambda}
\newcommand {\La} {\Lambda}
\newcommand{\om}{\omega}
\newcommand{\Om}{\Omega}
\newcommand {\sa} {\sigma}
\newcommand {\Sa} {\Sigma}
\newcommand {\te} {\theta}
\newcommand {\fy} {\varphi}
\newcommand {\ep} {\varepsilon}
\newcommand{\e}{\varepsilon}
\newcommand{\eps}{\epsilon}

\newcommand{\VEC}{\overrightarrow}
\newcommand{\ra}{\rightarrow}
\newcommand{\IN}{{\subset}}
\newcommand{\NI}{{\supset}}
\newcommand \dd  {\partial}
\newcommand {\mmm}{{\setminus}}
\newcommand{\probel}{\vspace{.5cm}}
\newcommand{\8}{{\infty}}
\newcommand{\0}{{\varnothing}}
\newcommand{\vse}{$\blacksquare$}
\newcommand{\ov}{\overline}
\newcommand{\ia}{{I^*}}
\newcommand{\io}{{I^\infty}}

\newcommand {\bfep} {{{\bar \varepsilon}}}
\newcommand {\Dl} {\Delta}
\newcommand{\vA}{{\vec {A}}}
\newcommand{\vB}{{\vec {B}}}
\newcommand{\vF}{{\vec {F}}}
\newcommand{\vf}{{\vec {f}}}
\newcommand{\vh}{{\vec {h}}}
\newcommand{\vJ}{{\vec {J}}}
\newcommand{\vK}{{\vec {K}}}
\newcommand{\vP}{{\vec {P}}}
\newcommand{\vX}{{\vec {X}}}
\newcommand{\vY}{{\vec {Y}}}
\newcommand{\vZ}{{\vec {Z}}}
\newcommand{\vx}{{\vec {x}}}
\newcommand{\va}{{\vec {a}}}
\newcommand{\vga}{{\vec {\gamma}}}

\newcommand{\hf}{{\hat {f}}}
\newcommand{\hg}{{\hat {g}}}

\newcommand{\bj}{{\bf {j}}}
\newcommand{\bi}{{\bf {i}}}
\newcommand{\bk}{{\bf {k}}}
\newcommand{\bX}{{\bf {X}}}

\newcommand{\eB}{{\EuScript B}}
\newcommand{\wP}{{\widetilde P}}
\newcommand{\eU}{{\EuScript U}}
\newcommand{\eS}{{\EuScript S}}
\newcommand{\eP}{{\EuScript P}}
\newcommand{\eC}{{\EuScript C}}
\newcommand{\eM}{{\EuScript M}}
\newcommand{\eN}{{\EuScript N}}

\newcommand{\eT}{{\EuScript T}}
\newcommand{\eG}{{\EuScript G}}
\newcommand{\eK}{{\EuScript K}}
\newcommand{\eF}{{\EuScript F}}
\newcommand{\eZ}{{\EuScript Z}}
\newcommand{\eL}{{\EuScript L}}
\newcommand{\eD}{{\EuScript D}}
\newcommand{\E}{{\EuScript E}}
\def \diam {\mathop{\rm diam}\nolimits}
\def \fix {\mathop{\rm fix}\nolimits}
\def \Lip {\mathop{\rm Lip}\nolimits}

\newcommand{\red}{\textcolor{red}}
\newcommand{\yellow}{\textcolor{yellow}}
\newcommand{\blue}{\textcolor{blue}}
\newcommand{\green}{\textcolor{green!60!black}}

\thispagestyle{empty}

\title[On subarcs in some non-postrcritically finite dendrites]{On the set of subarcs\\ in some non-postrcritically finite dendrites}
\author{{N.V. Abrosimov, M.V. Chanchieva, A.V. Tetenov}}%
\address{Nikolay Vladimirovich Abrosimov
\newline\hphantom{iii} Regional Scientific and Educational Mathematical Center,
\newline\hphantom{iii} Tomsk State University,
\newline\hphantom{iii} pr. Lenina, 36,
\newline\hphantom{iii} 634050, Tomsk, Russia\vspace{5pt}
\newline\hphantom{iii} Sobolev Institute of Mathematics,
\newline\hphantom{iii} pr. Koptyuga, 4,
\newline\hphantom{iii} 630090, Novosibirsk, Russia\vspace{5pt}
\newline\hphantom{iii} Novosibirsk State University,
\newline\hphantom{iii} Pirogova str., 1,
\newline\hphantom{iii} 630090, Novosibirsk, Russia}%
\email{abrosimov@math.nsc.ru}%

\address{Marina Vladimirovna Chanchieva
\newline\hphantom{iii} Gorno-Altaysk State University,
\newline\hphantom{iii} Lenkina str., 1,
\newline\hphantom{iii} 649000, Gorno-Altaysk, Russia}%
\email{marinachan93@gmail.com}%

\address{Andrey Viktorovich Tetenov
\newline\hphantom{iii} Regional Scientific and Educational Mathematical Center,
\newline\hphantom{iii} Tomsk State University,
\newline\hphantom{iii} pr. Lenina, 36,
\newline\hphantom{iii} 634050, Tomsk, Russia\vspace{5pt}
\newline\hphantom{iii} Gorno-Altaysk State University,
\newline\hphantom{iii} Lenkina str., 1,
\newline\hphantom{iii} 649000, Gorno-Altaysk, Russia}%
\email{a.tetenov@gmail.com}%

\thanks{\sc Abrosimov, N.V., Chanchieva, M.V., Tetenov, A.V.,
On the set of subarcs in some non-postrcritically finite dendrites}
\thanks{\copyright \ 2018 Abrosimov N.V., Chanchieva M.V., Tetenov A.V}
\thanks{\rm This work was supported by the Ministry of Education and Science of Russia (state assignment No. 1.12877.2018/12.1)}

\sloppy

\maketitle {\small
\begin{quote}
\noindent{\sc Abstract. } We construct a family of non-PCF dendrites $K$ in a plane, such that in each of them all subarcs have the same Hausdorff dimension $s$, while the set of $s$-dimensional Hausdorff measures of subarcs connecting the given point and a self-similar Cantor subset in K is a Cantor discontinuum.\medskip

\noindent{\bf Keywords:} self-similar dendrite, ramification point, Hausdorff dimension, postcritically finite set.
 \end{quote}
}

\section{Introduction}

Post-critically finite self-similar sets occupy significant position in the theory of self-similar sets. Their clear structure allowed to build contentful models of analysis and differential equations for such sets \cite{Kig,Str}. Their have also have very attractive geometric features: as it was proved by C.~Bandt in \cite{Bandt}, the set of dimensions of their minimal subarcs is finite. Particularly this holds for any postcritically finite self-similar dendrite $K$, whose set of cut points may be represented as  a countable union of images of arcs  $\ga_k, k=1,\ldots,n$ which are the components of attractor of some graph-directed IFS.

In this connection, much less is known about non-postcritically finite self-similar dendrites. Nevertheless, it turns out that non-PCF dendrites which satisfy one-point intersection property may have similar properties. 

In this paper, we show that in the case of non-postcritically finite dendrites such properties can also occur. We are constructing  a sufficiently wide family of systems of contraction similarities $\eS=\{S_0,S_1,S_2,S_3\}$ which are not postcritically finite, whose attractors $K$ are dendrites, lying in a triangle $\Da\subset \rr^2$ with the vertices $(0,0), (1,0), (1/2,\sqrt{3}/2)$ and for which the following properties hold.

\begin{enumerate}
\item All  subarcs $\gamma_{xy}\subset K, x\neq y$  and the set of cut points  of the dendrite $K$ have the same Hausdorff dimension $s$ (see Theorem 5, Corollary 6).
\item The set of $s$-dimensional measures $\ell_{Ox}$ of paths connecting the point $O=\textrm{Fix}(S_{0})$ with the points $x\in K\cap[0,1]$ lying on the base of the triangle $\Delta$ either is a one point set  or it is a self-similar Cantor discontinuum (see Theorem 7).
\end{enumerate}

\section{Preliminaries}

Let $\eS=\{S_1, S_2, \ldots, S_m\}$ be a system of {\rm(}injective{\rm)} contraction maps on the complete metric space $(X, d)$. A nonempty compact set $K\subset X$ is called {\em attractor} of the system $\eS$, if $K = \bigcup \limits_{i = 1}^m S_i (K)$. We also call the   subset $K \subset X$ {\em self-similar} with respect to $\eS$. Throughout the whole paper, the maps $S_i\in \eS$ are supposed to be  similarities and the set $X$ to be $\mathbb{R}^2$.

Let $I=\{1,2,\ldots,m\}$ be the set of indices, $\ia=\bigcup\limits_{n=1}^\8 I^n$ is the set of all finite $I$-tuples, or multiindices $\bj=j_1j_2\ldots j_n$. We write $S_\bj=S_{j_1j_2\ldots j_n}=S_{j_1}S_{j_2}\ldots S_{j_n}$ and for the set $A\subset X$ we denote $S_\bj(A)$ by $A_\bj$. We also denote by $G_\eS=\{S_\bj : \bj\in\ia\}$ the semigroup, generated by $\eS$.

Let $I^{\8}=\{{\tilde\al}=\al_1\al_2\ldots :\al_i\in I\}$ be {\em the index space}, and $\pi:I^{\8}\rightarrow K$ is {\em the index map}, which sends $\tilde\al$ to  the point $\bigcap\limits_{n=1}^\8 K_{\al_1\ldots\al_n}$.

By $\bi\bj$ we mean the concatenation of strings $\bi$ and $\bj$, the same $\bj\tilde\al$ is a concatenation $j_1\ldots j_n\al_1\al_2\ldots$

A non-empty compact $K \subset \mathbb{R}^2$ is called {\em attractor of the system} $\eS$, if $K = \bigcup_{i=1}^m S_i(K)$. The system $\eS$ is called {\em post-critically finite} (PCF), if the set $\{x \in K : \exists i_1,\ldots,i_n, j, l : S_{i_1\ldots i_n}(x) \in K_j \cap K_l\}$ is finite.

Let $\eC$ be the union of all  $S_i(K)\cap S_j(K)$, $i,j \in I, i\neq j$. {\em The post-critical set} $\eP$ of the system $\eS$ is the set of all $\alpha\in I^{\8}$ such that for some ${\bf j}\in I^*$, $S_ {\bf j}(\alpha)\in\eC$. In other words, $\eP= \lbrace \sigma^k(\alpha) : \alpha\in \eC,k\in \mathbb{N}\rbrace$, where the map $\sigma^k:I^{\8}\to I^{\8}$ is defined by $\sigma^k(\al_1\al_2\ldots)=\al_{k+1}\al_{k+2}\ldots$ A system $\eS$ is called {\em post-critically finite} (PCF) \cite{Kig} if its post-critical set is finite.

Let the set $K$ be connected. Then $K$ is arcwise connected and for any pair of points $x,y\in K$ we can consider a set $P_{xy}$ of paths form $x$ to $y$. Let $\be_{xy}=\inf\{\dim_H(\ga),\ga\in P_{xy}\}$. We will refer a path $\gamma\in P_{xy}$ as {\em minimal} one, if $\dim_{H}(\gamma)=\be_{xy}$. It was proved by C.~Bandt \cite{Bandt}, that the set of dimensions of all minimal paths in postcritically finite self-similar sets is finite. This gives the finiteness of the set of dimensions of all arcs in PCF dendrites.

\section{Construction}

Let $\Delta$ be the triangle on the plane $\rr^2$ with the vertices 
$$A_1=(0,0),\quad A_2=(1,0),\quad A_3=(1/2,\sqrt{3}/2).$$ 
Denote the set of vertices $\{A_1,A_2,A_3\}$ of $\Da$ by $V_\Da$.

Let  $p_1,p_2,p_3$ be such positive numbers that $p_1+p_2+p_3<1$. Define contraction similarities with fixed points at the vertices of $\Da$ as follows.
$$S_1=p_1z,\quad S_2=p_2z-p_2+1,\quad S_3=p_3z-p_3e^{i\frac{\pi}{3}}+e^{i\frac{\pi}{3}}.$$
Denote $\Delta_k=S_k(\Delta).$ Let $K'$ be the Cantor set generated by maps $ S_1, S_2, S_3$. Each point $x\in K'$ is defined by unique sequence of indices $\tilde a=a_1a_2\ldots $ where $a_k\in\{1,2,3\}$.

We now consider an equilateral triangle $\Delta_0$ such that its vertices $B_1, B_2, B_3$ satisfy the conditions
\begin{equation}\label{b1b2b3} 
B_1\in K'\cap S_{12}([A_2A_3]),\; B_2\in K'\cap S_{23}([A_1A_3]),\; B_3\in K'\cap S_{31}([A_1A_2]).
\end{equation}
Let $S_0$ be a similarity that maps the points $A_1,A_2,A_3$ to points $B_1, B_2, B_3$ respectively, and let $O=\textrm{fix}(S_{0})$ be its fixed point.

\resizebox{.95    \textwidth}{!}{{\scriptsize\begin{tikzpicture}[line cap=round,line join=round,>=stealth ,x=9.25cm,y=9.25cm]
\node[anchor=south west,inner sep=0] at (0,-.227) {
   \includegraphics[width=.45\textwidth]{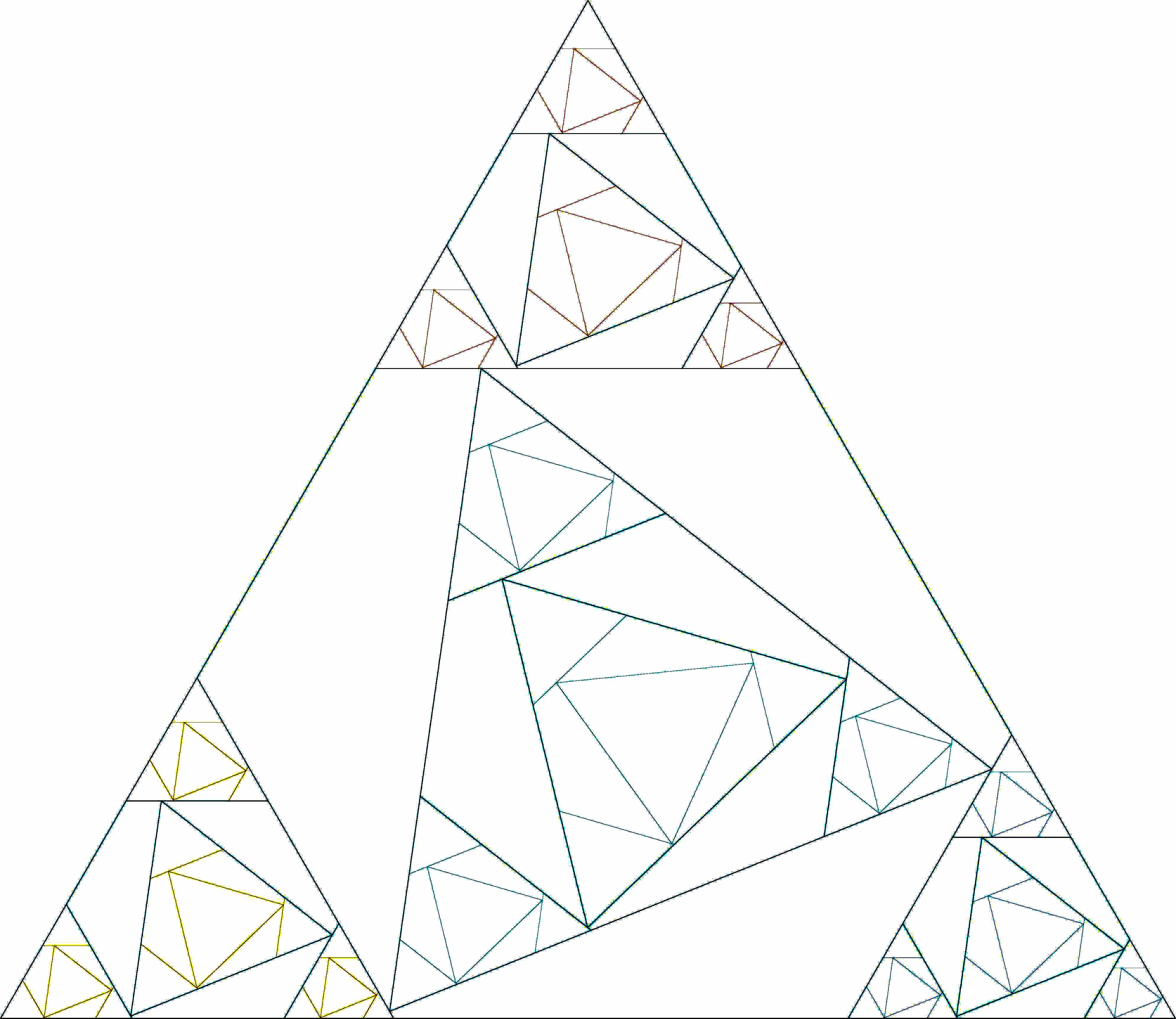}\qquad \qquad \includegraphics[width=.45\textwidth]{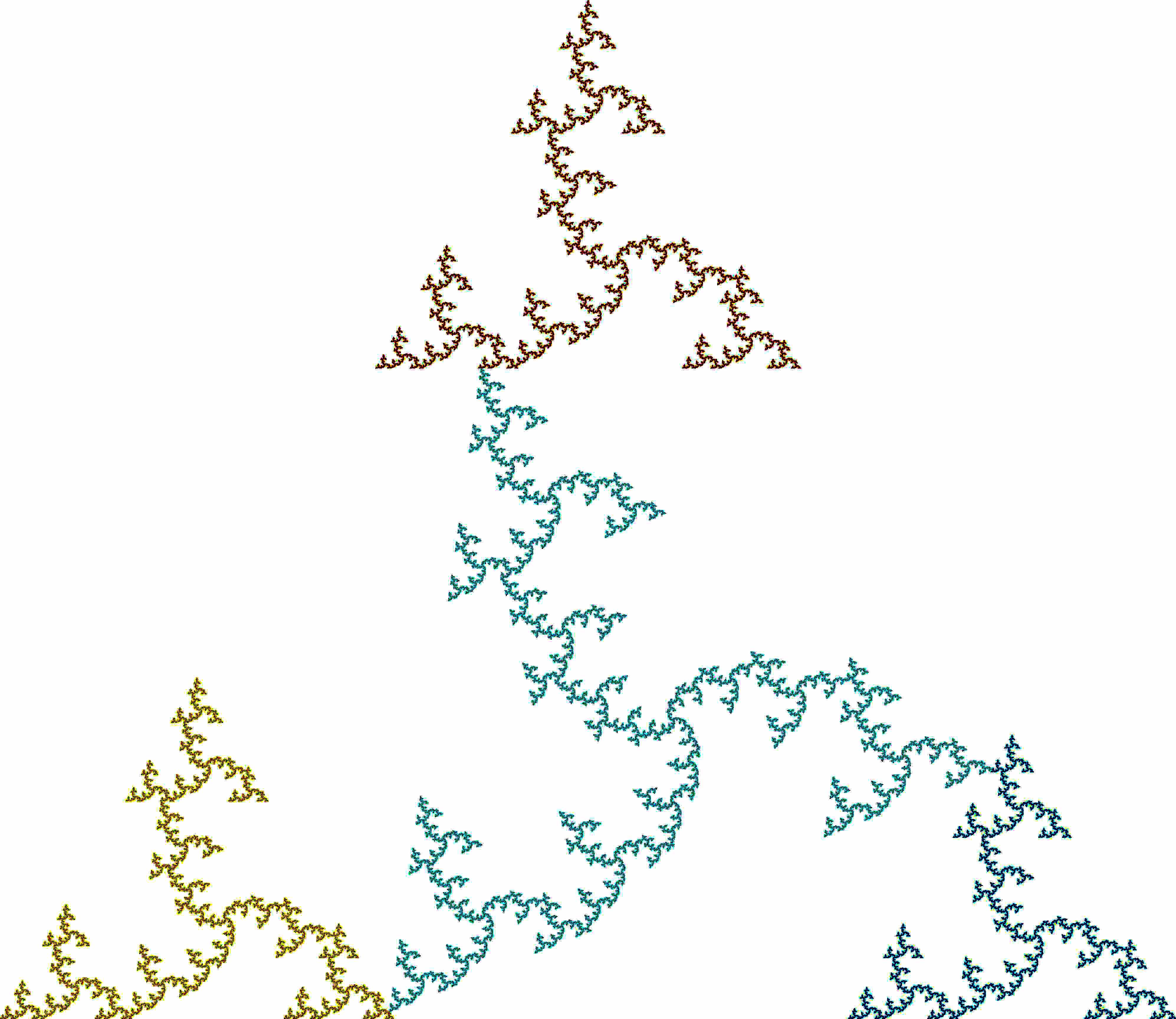}};
   \draw [line width=1 pt,color=red] (.197,.114)-- (.27,.114);
   \draw [line width=1 pt,color=red] (.207,-.226)-- (.1784,-.1755);
   \draw [line width=1 pt,color=red] (.5,-.1315)-- (.5315,-.0775); 
   \draw (0.18,-0.25) node {$S_{12}(A_2A_3)$} (0.12,0.114) node {$S_{31}(A_1A_2)$}(0.62,-0.07) node {$S_{23}(A_1A_3)$}(0.65,-0.22) node {$A_2$}(-0.025,-0.22) node {$A_1$}(0.35, 0.3) node {$A_3$}(0.23, -0.207) node {$B_1$}(0.26, 0.09) node {$B_3$}(0.49, -0.09) node {$B_2$}(0.35, -0.07) node {$O$};
\end{tikzpicture}}}

Denote by $\tilde{a}=a_1a_2\ldots, \tilde{b}=b_1b_2\ldots, \tilde{c}=c_1c_2\ldots$ the addresses of points $B_1,B_2,B_3$ in the set $K'$.
According to (\ref{b1b2b3}), $a_1=1, a_2=2;$ $b_1=2, b_2=3$ and $c_1=3, c_2=1.$

There are sufficiently many triples of points $B_1, B_2, B_3$ satisfying the conditions (\ref{b1b2b3}). For example, if $p_1=p_2=p_3$ then for any address $\widetilde{\alpha}=a_1a_2\ldots$ of point $B_1$, the points $B_2, B_3$ such addresses that $b_k=\sa(a_k), c_k=\sa(b_k)$ where $\sa=(1,2,3)$ is a cyclic permutation of indices $1,2,3$, the points $B_1, B_2, B_3$ form an equilateral triangle. The following Lemma shows that it is possible to choose the parameters $p_1,p_2,p_3,B_1,B_2,B_3$, in which the coefficients $p_1,p_2,p_3$ do not match.

\begin{lemma}
For any $\tilde c'\in\{1,2\}^\8$ different from $1111\ldots$, there exists a set of parameters $p_1,p_2,p_3,B_1,B_2,B_3$ such that $\tilde{c}=31\tilde c'$ and $p_1>p_2$.
\end{lemma}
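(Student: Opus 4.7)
The plan is to start from the symmetric configuration $p_1=p_2=p_3=p_0$ (with $p_0<1/3$) and then perturb. Letting $\sigma=(1\,2\,3)$ and $\rho$ be the $120^\circ$ rotation around the center of $\Delta$, one has $\rho\circ S_i=S_{\sigma(i)}\circ\rho$ when $p_1=p_2=p_3$, and hence $\rho\circ\pi=\pi\circ\sigma$ on $I^\infty$. Given $\tilde c=31\tilde c'$ with $\tilde c'\in\{1,2\}^\infty$, define $\tilde a=12\,\sigma(\tilde c')$ and $\tilde b=23\,\sigma^2(\tilde c')$. The digit restrictions then read $\tilde a\in\{1\}\{2\}\{2,3\}^\infty$ and $\tilde b\in\{2\}\{3\}\{1,3\}^\infty$, which are exactly the membership conditions for $B_1\in K'\cap S_{12}([A_2A_3])$ and $B_2\in K'\cap S_{23}([A_1A_3])$ (since these intersections are the images under $S_{12}$ and $S_{23}$ of the Cantor subsets of the sides of $\Delta$). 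Because $\tilde b=\sigma(\tilde a)$ and $\tilde c=\sigma(\tilde b)$, the points $B_i=\pi(\tilde a),\pi(\tilde b),\pi(\tilde c)$ form a $\rho$-orbit, so $B_1B_2B_3$ is equilateral. This settles the symmetric case.

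Next I fix the addresses $\tilde a,\tilde b,\tilde c$ and consider the real-analytic squared-length functions $L_{ij}(p_1,p_2,p_3)=|B_i-B_j|^2$. The cyclic construction yields the identity $L_{12}(p_1,p_2,p_3)=L_{23}(p_2,p_3,p_1)=L_{31}(p_3,p_1,p_2)$, so the equilateral variety $V=\{L_{12}=L_{23}=L_{31}\}$ is $\mathbb{Z}/3$-invariant and contains the diagonal. The goal is to exhibit a point of $V$ with $p_1>p_2$. Linearizing at $(p_0,p_0,p_0)$, the equilateral system becomes a $2\times 3$ linear system whose two rows are cyclic shifts of $(a-b,\,b-c,\,c-a)$, where $a,b,c$ are the partials of $L_{12}$ at the symmetric point. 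The kernel always contains $(1,1,1)$; what must be checked is that the exclusion $\tilde c'\neq 111\ldots$ yields an extra branch of $V$ transverse to the diagonal.

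The main obstacle is showing this extra branching. The role of the hypothesis is that when $\tilde c'$ contains at least one digit $2$, the point $\pi(\tilde c')$ genuinely depends on $p_2$, and consequently $B_3$ does too; the same mechanism makes $a,b,c$ above sensitive to all three parameters, which is needed to obtain a nondegenerate bifurcation. My fallback, if the analytic perturbation in $(p_1,p_2,p_3)$ alone is insufficient, is to exploit the discrete flexibility of the Cantor sets $K'\cap S_{12}([A_2A_3])$ and $K'\cap S_{23}([A_1A_3])$: by replacing a long suffix of $\tilde a$ or $\tilde b$ with a different admissible tail one moves $B_1$ or $B_2$ by a controlled small amount along a Cantor set, giving enough extra degrees of freedom to absorb a symmetry-breaking perturbation $p_1=p_0+\varepsilon,\ p_2=p_0-\varepsilon,\ p_3=p_0+\delta(\varepsilon)$ while keeping $B_1B_2B_3$ equilateral. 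Any such solution with $\varepsilon>0$ satisfies $p_1>p_2$, completing the lemma.
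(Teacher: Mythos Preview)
Your symmetric construction is fine, but the perturbation step has a real gap. You reduce everything to showing that the equilateral variety $V$ acquires a branch transverse to the diagonal at $(p_0,p_0,p_0)$, and you openly say this ``must be checked'' --- but you never check it. The heuristic that $\tilde c'\neq\bar1$ makes $B_3$ depend on $p_2$ does not by itself force a bifurcation; one would have to compute the rank of the $2\times3$ linearization and exclude the possibility that $V$ is locally just the diagonal, and you give no such computation. Your fallback cannot repair this: adjusting a tail of $\tilde a$ or $\tilde b$ moves $B_1$ or $B_2$ inside a \emph{totally disconnected} set, so you cannot continuously absorb a small error there, and you give no reason why the correction needed to restore equilaterality should land exactly on an admissible address. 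Finally, even if a transverse branch existed, you still owe an argument that the off-diagonal solution can be taken with $p_1>p_2$ rather than $p_1<p_2$; with the addresses fixed there is no symmetry swapping the two.

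The paper's argument is quite different and avoids all of this. It does not try to keep the three addresses cyclically linked. Instead it pins $B_1$ and $B_2$ at the simplest possible addresses $1\bar2$ and $2\bar3$ (the corner points $p_1$ and $1+p_2e^{2\pi i/3}$); the equilateral condition then forces $p_3=p_1$ and places the third vertex $C$ so that $S_3^{-1}(C)=1-p_2/p_1$. Now only $p_2$ varies. Conjugating by the zipper homeomorphism $\varphi_{p_1p_2}$ to the standard middle-third Cantor set, the image $\varphi_{p_1p_2}(1-p_2/p_1)$ depends continuously on $p_2$, equals $0$ when $p_2=p_1$, and tends to $1$ as $p_2\to0$. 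By the intermediate value theorem it hits every point of $C_{1/3}$ in $(0,1)$, in particular the one with address $1\tilde c'$; the hypothesis $\tilde c'\neq\bar1$ is used exactly to ensure the target is strictly positive, so the resulting $p_2$ satisfies $p_2<p_1$. This is a one-parameter IVT argument, not a bifurcation analysis.
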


\begin{proof}
Denote by $\eZ_{pq}$ the self-similar zipper on the interval $[0,1]$ with the vertices $\{0,p_1,1-p_2,1\}$ and the signature $(0,0,0)$. Let the map $\varphi_{p_1p_2}$ be a homeomorphism of the interval $[0,1]$ onto itself performing an isomorphism of the zipper $\eZ_{p_1p_2}$ onto the zipper $\eZ_0$ with the vertices $\{0,\frac{1}{3},\frac{2}{3},1\}$ and the same signature $(0,0,0)$.

The map $\varphi_{p_1p_2}(t)$ bijectively and continuously maps the Cantor set $K_{p_1p_2}$ generated by the maps $S_1(z)=p_1z$ and $S_2(z)=p_2z-p_2+1$ to the standard middle-third Cantor set $K_{1/3}$, and the point with the address $a_1a_2\ldots$ in $K_{pq}$ corresponds to a point with the same address in $K_{1/3}$. Denote by $\psi_{p_1p_2}(t)$ the map  inverse to $\fy_{p_1p_2}$.

Consider some pair of coefficients $p_1p_2$ and construct an equilateral triangle with vertices $S_1(\varphi_{p_2p_3}(0))$ (i.e. with the address $1222\ldots$) and $S_2(\varphi_{p_3p_1}(0))$ (i.e. with the address $2333\ldots$). The coordinates of these points are $p_1$ and $1+p_2e^{2i\pi/3}$, respectively.

From the condition of equality of the sides of the triangle $\Da_0$, we find the coordinates of the third vertex $C(p_1,p_2)=e^{i\pi/3}-p_2+p_1e^{-i\pi/3}$, hence $p_1=p_3$. Notice  that if $p_1=p_2$, the third vertex  coincides with the point $S_3(\varphi_{p_1p_2}(0))$, and as $p_2$ tends to $0$, the third vertex tends to $S_3(\varphi_{p_1p_2}(1))$.

Obviously, the coordinates of the point $C(p_1,p_2)$ continuously depend on $p_1$ and $p_2$, as does its projection $S_{3}^{-1}(C(p_1p_2))$ on the interval $[0,1]$.

The function $\varphi_{p_1p_2}(t):[0,1]\rightarrow[0,1]$ is continuous in $p_2$ and $t$ with $p_1>p_2$ and monotone in $t$ for any $p_1,p_2$. Note that $\varphi_{p_1p_2}(0)=0$ and $\varphi_{p_1p_2}(1)=1$.

Since the function $\psi_{p_1p_2} (1- \frac{p_2}{p_1} )$ is continuous in $p_2$ and vanishes at $p_1=p_2$, and at $p_2 \to 0$ it tends to $1$, then for any $a \in (0, 1)$ there is $p_2$ such that $\psi_{p_1p_2} (1- \frac{p_2}{p_1}  )=t$.

Therefore, for any point $t>0$ from $C_{\frac{1}{3}}$ with the address $c_2c_3\ldots\in\{1, 2\}^\8$ there is such $p_2<p_1$ that the triangle with the vertices $S_1(\varphi_{p_1p_2}(0)), S_2(\varphi_{p_1p_1}(0))$ and $S_3(\varphi_{p_1p_2}(t))$ is equilateral.
\end{proof}

\begin{theorem}
Let $\eS = \{S_1, S_2, S_3 , S_0\}$ be a system with parameters $p_1, p_2, p_3$, $B_1, B_2, B_3$, where each of the points $B_k$ lies in the corresponding set $S_k(\varphi_{p_jp_i}(a_k))$, where $a_k \in C_{\frac{1}{3}}$, and addresses of points $B_1, B_2, B_3$ satisfy condition (\ref{b1b2b3}). Then the attractor $K$ of the system $\eS$ is a dendrite.
\end{theorem}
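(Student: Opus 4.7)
The goal is to show that $K$ is a locally connected continuum containing no simple closed curve. The strategy is to pin down the combinatorics of the overlaps between the four pieces $K_i = S_i(K)$, $i\in\{0,1,2,3\}$, and then read the dendrite property off this combinatorial data.

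The first step is to establish that every non-empty intersection $K_i \cap K_j$ consists of a single point, and that the intersection graph on $\{0,1,2,3\}$ is the star centered at $0$. Since $p_1+p_2+p_3 < 1$ and each $\Delta_i = S_i(\Delta)$ sits at the vertex $A_i$ of $\Delta$, the triangles $\Delta_1,\Delta_2,\Delta_3$ are pairwise disjoint; hence $K_i \cap K_j = \emptyset$ for distinct $i,j \in \{1,2,3\}$. For each $i \in \{1,2,3\}$ the inclusion $B_i \in K_0 \cap K_i$ holds because $B_i = S_0(A_i) \in K_0$ and, by (\ref{b1b2b3}), $B_i$ has an address beginning with $i$, which puts it in $K_i$. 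The reverse inclusion $K_0 \cap K_i \subseteq \{B_i\}$ I would derive from the sharper geometric claim $\Delta_0 \cap \Delta_i = \{B_i\}$; this is where the placement of each $B_i$ on the inward-facing edge $S_{ij}([A_kA_l])$ of the sub-subtriangle $\Delta_{ij}$ is decisive.

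With the one-point intersection property in hand and the intersection graph a tree, Hata's theorem gives that $K$ is a continuum; local connectedness then follows by the standard argument for self-similar attractors with finite intersections. To preclude a simple closed curve $\gamma \subset K$, I would argue by self-similar rescaling: $\gamma$ cannot lie entirely in a single $K_i$, since $K_i$ is similar to $K$ and iterating would produce simple closed curves of arbitrarily small diameter inside the fixed curve $\gamma$ of positive diameter. Hence $\gamma$ meets at least two of the pieces $K_0,\dots,K_3$, and traversing between them is only possible through the unique intersection points $B_1,B_2,B_3$. Because the intersection graph is a tree, these points are cut points of $K_0\cup K_1\cup K_2\cup K_3$ restricted to the parts touched by $\gamma$, so $\gamma$ is forced to pass through a cut point — impossible for a simple closed curve.

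The main obstacle will be the geometric verification $\Delta_0 \cap \Delta_i = \{B_i\}$. One must check that the inscribed equilateral triangle $\Delta_0$, with vertices pinned to the edges $S_{ij}([A_kA_l])$ by (\ref{b1b2b3}), never intrudes into the interior of any $\Delta_i$. The constraints in (\ref{b1b2b3}) together with Lemma~1 are calibrated precisely for this: the edge of $\Delta_0$ opposite $B_i$ faces away from $\Delta_i$, while its two edges incident at $B_i$ leave $\Delta_i$ immediately. Once this purely Euclidean fact is secured and combined with $p_1+p_2+p_3<1$ to separate the outer triangles, the remainder of the argument — connectedness, local connectedness, and absence of loops — is essentially formal.
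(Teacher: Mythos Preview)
Your approach is sound and reaches the same conclusion, but by a genuinely different route. The paper argues via the nested approximants $\widetilde{\Delta^k}=T^k(\Delta)$: it records that each $\widetilde{\Delta^k}$ is connected, locally connected and simply connected, that distinct level-$k$ subtriangles meet in at most one vertex, and that the sequence decreases; it then cites a general result on polyhedral systems \cite{URO} to conclude that $K=\bigcap_k\widetilde{\Delta^k}$ is a dendrite. You instead stay at level one, extract the one-point intersections $K_0\cap K_i=\{B_i\}$ and the star-shaped intersection graph, invoke Hata's criterion for connectedness, and rule out simple closed curves by a direct rescaling and cut-point argument. Both proofs rest on the same Euclidean fact $\Delta_0\cap\Delta_i=\{B_i\}$, which you correctly flag as the crux (the paper folds it into property~(3) without further comment). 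Your version is more self-contained, trading the external citation for explicit IFS reasoning; the paper's version is shorter once the polyhedral machinery is available, letting simple connectedness of the approximants do the work your loop-exclusion does by hand. One small wrinkle: in your rescaling step, pulling $\gamma$ back by $S_i^{-1}$ enlarges rather than shrinks, so the clean contradiction is that $\gamma\subset K_{i_1\cdots i_n}$ for all $n$ would force $\diam\gamma=0$, not that small loops appear inside $\gamma$.
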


\begin{proof}
Let for every $\bj\in I^*$, $\Delta_\bj=S_\bj(\Delta)$. Let $T(A)=\bigcup\limits_{k=0}^4 S_k(A)$ be a Hutchinson operator of the system $\eS$. We set $\widetilde{\Delta^k}=T^k(\Delta)=\bigcup\limits_{\bj\in I^k} \Da_\bj$.
Each system $\widetilde{\Delta^k}$ has the following properties.

\begin{enumerate}
\item For each $k$, the set $\widetilde{\Delta^k}$ is connected, locally connected, and simply connected.
\item The diameter of each $\Delta_\bi = S_\bi(\Delta)\leqslant (Lip S_0)^{k}$.
\item If $|\bi_1|=|\bi_2|=k$, and $\bi_1 \neq \bi_2$ then $\Delta_{\bi_1} \cap \Delta_{\bi_2}$ is either empty or one-point set, which is a vertex of one of the triangles $\Delta_{\bi_1}$,  $\Delta_{\bi_2}$.
\item The sets $\widetilde{\Delta^k}$ form a decreasing sequence $\widetilde{\Delta^1}\supset \widetilde{\Delta^2} \supset \ldots$
\end{enumerate}

The attractor $K$ of the system $\eS$ coincides with the intersection of the decreasing sequence of the sets $\widetilde{\Delta^k}$, and therefore, according to \cite[Lemma~2, Theorem~5]{URO}, is a dendrite.
\end{proof}

\begin{lemma}
If at least one of the addresses $\tilde a,\tilde b,\tilde c$ is not periodic, then the system $\eS$ is not postcritically finite.
\end{lemma}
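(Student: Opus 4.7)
The plan is to pin down the critical set $\eC$ explicitly as a three-point set and then to exhibit an infinite family of distinct shifts inside the postcritical set $\eP$.

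First I would identify $\eC$. Property~(3) in the proof of Theorem~2 states that two triangles $\Delta_{\bi_1},\Delta_{\bi_2}$ with multiindices of equal length intersect in at most one common vertex. Applied at level~$1$: since $p_1+p_2+p_3<1$ and the three triangles $\Delta_1,\Delta_2,\Delta_3$ sit at distinct vertices of $\Delta$, they are pairwise disjoint; and by construction $\Delta_0\cap\Delta_k=\{B_k\}$ for $k=1,2,3$. Hence
$$\eC \;=\; \bigcup_{i\ne j} S_i(K)\cap S_j(K) \;=\; \{B_1,B_2,B_3\}.$$

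Next I would enumerate the addresses of each $B_k$. Since $B_k$ lies in exactly two first-level pieces, namely $\Delta_0$ and $\Delta_k$, it has exactly two addresses in $\eS$: the address $0\,\overline{k}$ coming through $\Delta_0$ (because $B_k=S_0(A_k)$ and $A_k=\pi(\overline{k})$), and the address $\tilde a$ (respectively $\tilde b$, $\tilde c$) coming through $\Delta_k$, which is simply the address of $B_k$ in the Cantor attractor $K'$ of $\{S_1,S_2,S_3\}$, viewed as an element of $\{1,2,3\}^\infty\subset I^\infty$.

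Finally, by the definition of the postcritical set, $\eP$ contains every shift $\sigma^j(\alpha)$ of every address $\alpha$ of every point of $\eC$. The shifts of $0\,\overline{k}$ yield only the two sequences $0\,\overline{k}$ and $\overline{k}$, a finite contribution. On the other hand, if one of $\tilde a,\tilde b,\tilde c$ is non-periodic (understood as having no eventual period, which is what the argument actually needs), then its shift orbit $\{\sigma^j(\tilde a):j\ge 0\}$ is already infinite, and therefore $\eP$ is infinite, so $\eS$ is not PCF.

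The only substantive step is the identification of $\eC$; but this is immediate from the one-point intersection property recorded in Theorem~2 and the hypothesis $p_1+p_2+p_3<1$, so the proof is more a bookkeeping check than a genuine technical challenge.
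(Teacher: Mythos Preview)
The paper states this lemma without proof, so there is nothing to compare against; your argument is the only proof on offer, and it is correct. Your identification $\eC=\{B_1,B_2,B_3\}$ follows from the level-one geometry exactly as you say, and since each $B_k$ has $\tilde a$ (resp.\ $\tilde b$, $\tilde c$) among its addresses, any such sequence that is not eventually periodic contributes an infinite shift orbit to $\eP$. Your parenthetical remark is apt: the conclusion really requires ``not eventually periodic'' rather than merely ``not periodic,'' so either the hypothesis of the lemma is slightly misstated or the authors are using ``periodic'' loosely to mean ``eventually periodic.''
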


\section{Dimensions and measures of subarcs}

Let $O$ be a fixed point of the map $S_0$. Denote by $\gamma_k$ the subarcs $\gamma_{OA_k}\subset K$ with endpoints $O$ and $A_k$, $k=1,2,3$. It is easy to verify that these arcs have equal dimensions.

\begin{lemma}
$dim_H(\gamma_1)=dim_H(\gamma_2)=dim_H(\gamma_3)$.
\end{lemma}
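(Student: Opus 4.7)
The plan is to prove the equality in two moves: first, to reduce each $\dim_H(\gamma_k)$ to the dimension of the ``outer half'' $M_k=\gamma_{B_kA_k}\subset\Delta_k$; and then to derive a cyclic chain $\dim_H M_1\ge\dim_H M_2\ge\dim_H M_3\ge\dim_H M_1$, which forces equality. The geometric input is that the condition $p_1+p_2+p_3<1$ makes $\Delta_1,\Delta_2,\Delta_3$ pairwise disjoint, while $\Delta_0\cap\Delta_k=\{B_k\}$ by construction; so in the dendrite $K$ any arc joining points of different $\Delta_i,\Delta_j$ (with $i\ne j$) must traverse $\Delta_0$ through the respective $B_i,B_j$.

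First I would note that the arc from $O\in\Delta_0$ to $A_k\in\Delta_k$ is forced through $B_k$, and that $S_0$ fixes $O$ and sends $A_k$ to $B_k$; hence $\gamma_k=S_0(\gamma_k)\cup M_k$. Iterating this identity and using that $S_0^n(\gamma_k)$ collapses to $\{O\}$ yields
$$\gamma_k=\{O\}\cup\bigcup_{n=0}^{\infty}S_0^n(M_k),$$
so by countable stability and similarity invariance of Hausdorff dimension, $\dim_H(\gamma_k)=\dim_H(M_k)$. An identical iteration applied to the bridging arc $\gamma_{A_iA_j}=M_i\cup S_0(\gamma_{A_iA_j})\cup M_j$ then shows $\dim_H(\gamma_{A_iA_j})=\max(\dim_H M_i,\dim_H M_j)$; this auxiliary fact is used below.

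For the cyclic inequality I would exploit the specific addresses of $B_1,B_2,B_3$. Because $\tilde a=12\ldots$, the point $S_1^{-1}(B_1)$ lies in $\Delta_2$, and the arc $S_1^{-1}(M_1)\subset K$ then connects $\Delta_2$ to $A_1\in\Delta_1$: it is forced through $B_2$, crosses $\Delta_0$, and enters $\Delta_1$ at $B_1$. In particular it contains the bridging arc $\gamma_{B_2B_1}=S_0(\gamma_{A_1A_2})$, so applying $S_1$,
$$M_1\supset S_{10}(\gamma_{A_1A_2}),$$
whence $\dim_H M_1\ge\dim_H\gamma_{A_1A_2}=\max(\dim_H M_1,\dim_H M_2)\ge\dim_H M_2$. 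The same argument, using $\tilde b=23\ldots$ and $\tilde c=31\ldots$, gives $\dim_H M_2\ge\dim_H M_3$ and $\dim_H M_3\ge\dim_H M_1$. The chain collapses to equality, and combined with $\dim_H(\gamma_k)=\dim_H(M_k)$ this finishes the proof.

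The main (minor) obstacle is the rigorous justification of the arc decompositions used above: namely, that the unique arc in $K$ between two points of a subdendrite $S_\bi(K)$ lies entirely in $S_\bi(K)$, and that an arc between points of distinct $\Delta_i,\Delta_j$ actually crosses $B_i$ and $B_j$. Both follow from the standard dendrite property that any connected subset contains the arc between its points, together with the disjointness and one-point-intersection conditions on $\{\Delta_i\}$ that were already used in proving Theorem~2.
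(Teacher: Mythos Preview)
Your proof is correct and uses the same core idea as the paper: the address conditions $\tilde a=12\ldots$, $\tilde b=23\ldots$, $\tilde c=31\ldots$ force similarity containments that yield a cyclic chain of dimension inequalities. The paper's version is more direct---it simply observes $\gamma_1\supset S_1S_0(\gamma_2)$, $\gamma_2\supset S_2S_0(\gamma_3)$, $\gamma_3\supset S_3S_0(\gamma_1)$ and concludes immediately, without your preliminary reduction to the outer halves $M_k$ or the auxiliary computation of $\dim_H\gamma_{A_iA_j}$.
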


\begin{proof}
Indeed, according to the conditions (BB) we have
\begin{align*}
\gamma_1&\supset S_1S_0(\gamma_2),\\
\gamma_2&\supset S_2S_0(\gamma_3),\\
\gamma_3&\supset S_3S_0(\gamma_1).
\end{align*}
 
Therefore, $dim_H(\gamma_1)\geqslant dim_H(\gamma_2)\geqslant dim_H(\gamma_B)\geqslant dim_H(\gamma_1)$, that gives the result of the Lemma.
\end{proof}

Denote by $s$ the $dim_H(\gamma_1)$. Note that $s\geqslant 1$.

\begin{theorem}
The dimension of the set $CP(K)$ of cut points in $K$ is $s$.
\end{theorem}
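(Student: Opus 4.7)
The plan is to establish matching bounds $\dim_H CP(K)\geq s$ and $\dim_H CP(K)\leq s$.

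The lower bound is immediate from Lemma~4. In the dendrite $K$, the arc $\gamma_1$ is the unique arc joining $O$ to $A_1$, so for any $x\in\gamma_1\setminus\{O,A_1\}$ the set $K\setminus\{x\}$ is disconnected (with $O$ and $A_1$ in different components). Hence $\gamma_1\setminus\{O,A_1\}\subseteq CP(K)$, and therefore
$$\dim_H CP(K)\geq \dim_H\gamma_1 = s.$$

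For the upper bound, set $\Gamma=\gamma_1\cup\gamma_2\cup\gamma_3$, which has Hausdorff dimension $s$ by Lemma~4. I would prove the inclusion
$$CP(K)\;\subseteq\;\bigcup_{\bj\in I^*} S_\bj(\Gamma).$$
Granting this, each $S_\bj(\Gamma)$ is similar to $\Gamma$ and hence has dimension $s$; since the union is countable, countable stability of Hausdorff dimension gives $\dim_H CP(K)\leq s$. To establish the inclusion, I first note that the vertex set $V=\bigcup_\bj S_\bj(V_\Da)$ is countable and dense in $K$ because $\diam\Da_\bj\to 0$; thus for any $x\in CP(K)$ each open component of $K\setminus\{x\}$ contains elements of $V$, and one finds $y=S_\bi(A_l)$, $z=S_\bj(A_m)\in V$ lying in distinct components, so that $x\in\gamma_{yz}$. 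It therefore suffices to show that every arc between vertex points lies in $\bigcup_\bk S_\bk(\Gamma)$. At level $0$ this is just $\gamma_{A_lA_m}=\gamma_l\cup\gamma_m\subseteq\Gamma$, since both arcs pass through $O$. In general, I would decompose $\gamma_{yz}$ along the finite chain of level-$n$ triangles $\Da_{\bk_1},\ldots,\Da_{\bk_N}$ it traverses (finite because of the $4^n$ total pieces), with consecutive ones meeting at a single connection point; within each $\Da_{\bk_i}$ the sub-arc runs between two boundary points which, passing to a sufficiently deeper refinement, can be identified with vertices of some $\Da_{\bk_i\bk'}$, and is then of the form $S_{\bk_i\bk'}(\gamma_l\cup\gamma_m)\subseteq S_{\bk_i\bk'}(\Gamma)$.

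The main obstacle, and the genuinely non-PCF feature of this construction, is that the connection points $B_k=S_0(A_k)$ are vertices of $\Da_0$ but lie at interior Cantor points of $\Da_k$ with infinite non-periodic addresses $\tilde a,\tilde b,\tilde c$. Consequently, when $\gamma_{yz}$ crosses from $\Da_0$ into $\Da_k$ at such a $B_k$, the entry point on the $\Da_k$-side is not a vertex of $\Da_k$ at any finite level, and a naive induction on level does not terminate. The technical crux is to verify that these limit-tail contributions are nonetheless absorbed into the countable cover $\bigcup_\bj S_\bj(\Gamma)$, using crucially that $B_k\in\gamma_k\subseteq\Gamma$ for every $k$ so that the analogous inclusion $S_\bj(B_k)\in S_\bj(\Gamma)$ propagates to all scales, and that the limit points arising from non-terminating chains either coincide with such connection points or are endpoints of $K$ and hence not cut points. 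Once this is checked, the two bounds combine to give $\dim_H CP(K)=s$.
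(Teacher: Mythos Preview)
Your lower bound is correct and matches the paper's implicit argument. The gap is in the upper bound, and you have correctly located it yourself: the inductive decomposition of $\gamma_{yz}$ into pieces of the form $S_{\bk}(\gamma_l\cup\gamma_m)$ does not close. When the arc passes from $\Da_0$ into $\Da_k$ at $B_k$, the entry point $S_k^{-1}(B_k)$ has the purely nonzero address $a_2a_3\ldots$ and is \emph{never} a vertex of any $\Da_{\bk'}$, so the phrase ``passing to a sufficiently deeper refinement, can be identified with vertices'' is exactly what fails. Your final paragraph proposes to absorb these limit tails using $B_k\in\Gamma$, but this only handles the countably many connection points themselves, not the continuum of points on the arc between $B_k$ and the next connection point inside $K_k$; and the claim that the remaining limit points ``are endpoints of $K$'' is asserted, not proved. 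In fact the paper does \emph{not} establish the inclusion $CP(K)\subset\bigcup_{\bj}S_\bj(\Gamma)$ that you are aiming for, and it is not clear that it holds.

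The paper avoids this difficulty by weakening the target inclusion. Rather than decomposing arcs, it looks directly at the address $j_1j_2\ldots$ of a cut point $x\notin G_\eS(V_\Da)$ and splits into two cases. If the address contains infinitely many $0$'s, one chooses $k$ with $j_k=0$ and $\diam\Da_{j_1\ldots j_k}$ small; then $K_{j_1\ldots j_k}$ meets the rest of $K$ only at its three vertices $S_{j_1\ldots j_k}(A_i)$, so the two components of $K\setminus\{x\}$ pick out two of these vertices and $x$ lies on $S_{j_1\ldots j_k}(\gamma_i\cup\gamma_{i'})\subset S_{j_1\ldots j_k}(\Gamma)$. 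If the address is eventually free of $0$'s, then $x\in G_\eS(K')$. Thus
\[
CP(K)\ \subset\ G_\eS(\Gamma)\ \cup\ G_\eS(K'),
\]
and since $\dim_H K'<1\le s$ the second set is harmless for the dimension count. The idea you are missing is precisely this: do not try to force every cut point into $\bigcup_\bj S_\bj(\Gamma)$, but allow an exceptional set $G_\eS(K')$ of strictly smaller dimension to catch the non-PCF residue.
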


\begin{proof}
Let $\bj=j_1\ldots j_k$ be the multiindex of length $k$. If the index $j_k$ is $0$ then intersection of the triangle $\Da_\bj$ with the set $\bigcup\limits_{\bi\in I^k\mmm\{\bj\}} \Da_\bi$ consists of no more than three points, each of which is a vertex of the triangle $\Da_\bj$. If $j_k\neq 0$ then for each $\bi\in I^k\mmm\{\bj\}$ for which $\Da_\bj\cap\Da_\bi\neq\0$, the index $i_k=0$, and the intersection is a one-point set consisting of a single vertex of the triangle $\Da_\bi$. Therefore, for every $\bj=j_1\ldots j_k$, the set $K_\bj\cap\overline{K\mmm K_\bj}$ consists of no more than three points, each of which lies in $G_\eS(V_\Da)$.

Let us now take some point $x\in CP(K)$ not lying in $G_\eS(V_\Da)$. This point has the single address $j_1j_2j_3\ldots\in I^\8$. Let $U_1,U_2$ be any two connected components of $K\mmm\{x\}$, and $0<d<\min(\diam U_1, \diam U_2)$. Suppose there is such $k$, that for $\bj=j_1\ldots j_k$, $j_k=0$, and $\diam \Da_\bj<d$. Then each of the intersections  $U_i\cap K_\bj\cap\overline{K\mmm K_\bj}$ is one of the vertices $A'_i$ of the triangle  $\Da_\bj$, and the arc $\ga_{A'_1A'_2}$ is a subarc $S_\bj(\hat\ga)$, containing the point $x$.

If there is no such $k$ then for some  $N$, $k>N$ implies $j_k\neq 0$ which means that $x\in G_\eS(K')$.

Thus $CP(K)\subset G_\eS(\hat\ga\cup K')$. Since $\dim \hat\ga=s>1$, and $\dim K'\le 1$ then $\dim_H CP(K)=s$.
\end{proof}

\begin{corollary}
For any subarc $\ga_{xy}\in K$, $\dim_H(\ga_{xy})=s$.
\end{corollary}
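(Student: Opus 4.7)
The plan is to prove both inequalities $\dim_H(\gamma_{xy})\leq s$ and $\dim_H(\gamma_{xy})\geq s$ separately.

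The upper bound follows immediately from Theorem 5: with $x\neq y$, every point of $\gamma_{xy}\mmm\{x,y\}$ is a cut point of the dendrite $K$, since removing it separates $x$ from $y$. Hence $\gamma_{xy}\mmm\{x,y\}\IN CP(K)$, and by monotonicity of Hausdorff dimension $\dim_H(\gamma_{xy})\leq\dim_H(CP(K))=s$.

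For the lower bound I first aim to show that $\dim_H(\gamma_{A_iA_j})=s$ for every pair of distinct indices $i,j\in\{1,2,3\}$. Since $K_i\cap K_j=\0$ while $K_0\cap K_l=\{B_l\}$, the portion of $\gamma_{A_iA_j}$ lying in $K_0$ is an arc from $B_i=S_0(A_i)$ to $B_j=S_0(A_j)$ inside the dendrite $K_0$ and, by uniqueness of arcs in a dendrite, coincides with $S_0(\gamma_{A_iA_j})$. Iterating, $\bigcap_{n\geq 0}S_0^n(\gamma_{A_iA_j})=\{O\}\IN\gamma_{A_iA_j}$, so $\gamma_{A_iA_j}\supset\gamma_i$, and Lemma 4 yields $\dim_H(\gamma_{A_iA_j})\geq s$; the reverse inequality is part of the upper bound already proved.

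Next I would locate, inside any given $\gamma_{xy}$, a subarc of the form $S_{\bj}(\gamma_{A_iA_j})$. Pick an interior point $p\in\gamma_{xy}$ with unique address $a_1a_2\ldots\in I^\8$; this is possible since the set of points with non-unique address, together with $G_\eS(V_\Da)$, is countable. Let $U_1,U_2$ be the two components of $\gamma_{xy}\mmm\{p\}$, and choose $k$ with $\diam\Da_{a_1\ldots a_k}<\min(\diam U_1,\diam U_2)$. As in the proof of Theorem 5, $K_{a_1\ldots a_k}$ meets $\overline{K\mmm K_{a_1\ldots a_k}}$ only at the vertices $S_{a_1\ldots a_k}(A_l)$, $l=1,2,3$; since $\gamma_{xy}$ extends beyond $\Da_{a_1\ldots a_k}$ on both sides of $p$, it must traverse $K_{a_1\ldots a_k}$ between two such vertices $V_i,V_j$ with $i\neq j$, and uniqueness of arcs identifies the traversing subarc with $S_{a_1\ldots a_k}(\gamma_{A_iA_j})$. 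The previous step then gives $\dim_H(\gamma_{xy})\geq s$.

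I expect the main obstacle to be the step $O\in\gamma_{A_iA_j}$: it is precisely this fact which allows the information $\dim_H(\gamma_i)=s$ provided by Lemma 4 to propagate to arbitrary vertex-to-vertex arcs, and from there, via self-similar scaling, to every subarc of $K$.
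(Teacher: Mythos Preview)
Your overall strategy matches the paper's: the upper bound via $CP(K)$, and the lower bound by locating inside $\gamma_{xy}$ a small self-similar copy of a vertex-to-vertex arc. Your explicit verification that $O\in\gamma_{A_iA_j}$ (whence $\gamma_{A_iA_j}\supset\gamma_i$) is correct and spells out something the paper leaves implicit through the notation $\hat\gamma=\gamma_1\cup\gamma_2\cup\gamma_3$.

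There is, however, a genuine gap in the step you did \emph{not} flag. You assert that $K_{a_1\ldots a_k}\cap\overline{K\setminus K_{a_1\ldots a_k}}$ consists only of the vertices $S_{a_1\ldots a_k}(A_l)$, but the proof of Theorem~5 establishes this only when the last index $a_k$ equals $0$; when $a_k\neq 0$ the intersection points are vertices of the \emph{neighbouring} triangles $\Da_\bi$ with $i_k=0$, not of $\Da_{a_1\ldots a_k}$ itself. For instance, with $\bj=11$ the sole boundary point of $K_{11}$ in $K$ is $S_1(B_1)=S_{10}(A_1)$, which lies on a side of $\Da_{11}$, not at a vertex. In that situation your identification of the traversing subarc with $S_{a_1\ldots a_k}(\gamma_{A_iA_j})$ breaks down, and you cannot invoke the first part of your argument.

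The repair is exactly what the paper does: choose the interior point $p$ so that its address contains infinitely many zeros, equivalently $p\notin G_\eS(K')$. Such points are dense in $\gamma_{xy}$ because $G_\eS(K')$ is totally disconnected (a countable union of copies of the Cantor set $K'$) and hence cannot contain a non-degenerate subarc. With such a $p$ you can pick $k$ with $a_k=0$ and $\diam\Da_{a_1\ldots a_k}$ small; then the boundary points of $K_{a_1\ldots a_k}$ really are vertices of $\Da_{a_1\ldots a_k}$, and the rest of your argument goes through. The step you anticipated as the main obstacle, $O\in\gamma_{A_iA_j}$, is in fact unproblematic.
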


\begin{proof}
Since the set $ G_\eS(K')$ is totally disconnected then the set of all points, whose addresses contain infinite number of zeros is dense in $\ga_{xy}$. As it was shown in the previous proof, for any $z\in \ga_{xy}$, different from $x$ and $y$, there is such multiindex $\bj$, that $\ga_{xy}\cap K_\bj\subset S_\bj(\hat\ga)$. Therefore, $\dim_H(\ga_{xy})\ge s$. Since $\ga_{xy}\subset CP(K)$ then $\dim_H(\ga_{xy})= s$.
\end{proof}

Let $M$ be the set of arcs $\gamma_{Ox}$ with endpoits in $O$ and $x \in K\bigcap[0, p_1 ]$, $N$ be the set of arcs $\gamma_{Ox}$ with endpoints $O$ and $x \in K\bigcap[1-p_2, 1 ]$. Let $\eM=\{\ell_{Ox} : x \in K\bigcap[0, p_1 ]\}$ be the set $s$-dimensional Hausdorff measures of the arcs $\gamma_{Ox}\in M$ and $\eN=\{\ell_{Ox} : x \in K\bigcap[1-p_2, 1 ]\}$ be the set $s$-dimensional Hausdorff measures of the arcs $\gamma_{Ox}\in N$.

\begin{theorem}
The sets $\eM, \eN$ are the components of the attractor of some graph-directed system of contraction similarities in
$\rr$, and  
$$\dim_H\eM=\dim_H\eN\le\dim_H(K'\cap[0,1])<1.$$
\end{theorem}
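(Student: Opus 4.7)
The plan is to identify $K\cap[0,1]$ as a Cantor set of dimension $<1$, exhibit $\eM,\eN$ as components of a graph-directed system of affine contractions on $\rr$, and bound their common dimension by a H\"older estimate. For the first step, observe that $S_1,S_2$ are positive real dilations sending $[0,1]$ to $[0,p_1]$ and $[1-p_2,1]$, while $S_0([0,1])$ lies strictly inside $\Delta_0$ and $S_3([0,1])$ avoids the bottom side. Therefore $K\cap[0,1]=S_1(K\cap[0,1])\cup S_2(K\cap[0,1])$ coincides with the Cantor attractor $K'\cap[0,1]$ of $\{S_1|_{[0,1]},S_2|_{[0,1]}\}$, whose Hausdorff dimension $d$ satisfies $p_1^d+p_2^d=1$; since $p_1+p_2<1$ one has $d<1$.

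\textbf{Stage 2.} Put $\Phi_M(y)=\ell_{O,S_1(y)}$ and $\Phi_N(y)=\ell_{O,S_2(y)}$ for $y\in K'\cap[0,1]$, so that $\eM=\Phi_M(K'\cap[0,1])$ and $\eN=\Phi_N(K'\cap[0,1])$. Decompose $\gamma_{O,S_1(y)}=S_0(\gamma_1)\cup S_1(\gamma_{B_1^{(1)},y})$ with $B_1^{(1)}=S_1^{-1}(B_1)\in\Delta_2$, and analyse the residual arc according to whether $y=S_1(z)$ (the arc $\gamma_{B_1^{(1)},S_1(z)}$ crosses $\Delta_0$ through $B_2$ and $B_1$) or $y=S_2(z)$ (the arc $\gamma_{B_1^{(1)},S_2(z)}=S_2(\gamma_{\tilde B_1,z})$ stays in $\Delta_2$, where $\tilde B_1=\pi(\sigma^2\tilde a)$). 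This yields affine recurrences of the shape
\[\Phi_M(S_1(z))=\alpha_1+p_1^s\,\Phi_M(z),\qquad \Phi_M(S_2(z))=\beta_1+(p_1p_2)^s\,\Psi_1(z),\]
where $\Psi_1(z):=\ell(\gamma_{\tilde B_1,z})$ is a new length functional based at the shifted vertex $\tilde B_1$. Applying the same dissection to $\Phi_N$ and to every newly introduced $\Psi_j$ (based at further shifts of the addresses $\tilde a,\tilde b,\tilde c$) produces a graph-directed system of affine contractions on $\rr$ whose attractor has $\eM$ and $\eN$ among its components.

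\textbf{Stage 3.} To bound $\dim_H\eM$ I show $\Phi_M$ is $s$-H\"older. Given distinct $y,y'\in K'\cap[0,1]$, let $\bj$ be the longest common prefix of their $\{1,2\}$-addresses; then $y,y'$ lie in the disjoint subintervals $S_\bj([0,p_1])$ and $S_\bj([1-p_2,1])$, separated by a gap of length $p_\bj(1-p_1-p_2)$, so $|y-y'|\ge p_\bj(1-p_1-p_2)$. Simultaneously $\gamma_{y,y'}\subset S_\bj(K)$ (since in the dendrite $K$ the unique arc between points of the sub-dendrite $S_\bj(K)$ stays inside it), hence $\ell(\gamma_{y,y'})\le L p_\bj^s$ with finite $L=\sup\{\ell(\gamma_{x,x'}):x\in K\cap[0,p_1],\,x'\in K\cap[1-p_2,1]\}$. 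Combining with the dendrite identity
\[|\Phi_M(y)-\Phi_M(y')|\le \ell(\gamma_{S_1(y),S_1(y')})=p_1^s\,\ell(\gamma_{y,y'})\le \frac{p_1^s L}{(1-p_1-p_2)^s}|y-y'|^s,\]
we see that $\Phi_M$ is $s$-H\"older. Since $s\ge 1$, this gives $\dim_H\eM\le \dim_H(K'\cap[0,1])/s\le \dim_H(K'\cap[0,1])<1$; the analogous estimate handles $\eN$. Equality $\dim_H\eM=\dim_H\eN$ follows from the $S_1\leftrightarrow S_2$ symmetry of the construction, which puts $\Phi_M,\Phi_N$ in the same strongly connected class of the directed graph.

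The main obstacle is stage~2: because the addresses $\tilde a,\tilde b,\tilde c$ need not be eventually periodic (the non-PCF setting), the functionals $\Psi_j$ proliferate indefinitely and the graph-directed system is typically countably infinite, so one must carefully check that the recursion closes into a well-defined attractor and that $\eM,\eN$ indeed appear as components. The H\"older estimate of stage~3, by contrast, is insensitive to this combinatorial complication and delivers the dimension bound cleanly.
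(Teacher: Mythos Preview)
Your Stage~2 is the decisive gap. You introduce new basepoints $\tilde B_1=\pi(\sigma^2\tilde a)$ and their shifts, and you correctly observe that in the non-PCF setting these proliferate into a countably infinite family of functionals $\Psi_j$. But the theorem asserts that $\eM,\eN$ are components of a graph-directed IFS in the standard (finite) Mauldin--Williams sense; an infinite directed graph does not deliver this, and you yourself flag the closure of the recursion as unresolved. The paper's point is precisely that a \emph{finite} system with just the two components $\eM,\eN$ exists, and this requires an idea you are missing: for $x\in S_1S_2^{\,k}(K\cap[0,p_1])$ with $0\le k\le n$ (where $n$ is determined by $B_1\in S_1S_2^{\,n}(K)\setminus S_1S_2^{\,n+1}(K)$), the arc $\gamma_{Ox}$ must cross the central subtriangle $S_1S_2^{\,k}(\Delta_0)$ and hence passes through $S_1S_2^{\,k}(O)$. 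Writing $\lambda_k=\gamma_{O,S_1S_2^k(O)}$ one gets $\gamma_{Ox}=\lambda_k\cup S_1S_2^{\,k}(\gamma_{Ox'})$ with $x'\in K\cap[0,p_1]$ again, so the basepoint is always reset to $O$ and never drifts to the shifted addresses $\sigma^j\tilde a$. This yields the explicit finite system
\[
\eM=\bigcup_{k=0}^{n}\sigma_k(\eM)\cup\sigma_n(\eN),\qquad \eN=\sigma'(\eM)\cup\sigma'(\eN),
\]
with $\sigma_k(x)=\ell_k+p_1^{\,s}p_2^{\,ks}x$ and $\sigma'(x)=\ell'+p_2^{\,s}x$, from which the Mauldin--Williams equations collapse to $p_1^{\,t}+p_2^{\,t}=1$ and give $\dim_H\eM=\dim_H\eN=\dim_H(K'\cap[0,1])$.

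Two smaller issues. Your equality $\dim_H\eM=\dim_H\eN$ via an ``$S_1\leftrightarrow S_2$ symmetry'' is not available: the construction explicitly allows $p_1\neq p_2$ (Lemma~1). In the paper this equality comes for free from strong connectivity of the two-vertex graph above. On the other hand, your Stage~3 H\"older argument is correct and is a genuinely different (and in fact sharper) route to the inequality: it gives $\dim_H\eM\le \dim_H(K'\cap[0,1])/s\le \dim_H(K'\cap[0,1])$, without needing the graph-directed description at all. So Stage~3 salvages the dimension bound, but the structural claim that $\eM,\eN$ form the attractor of a graph-directed system still needs the ``reset to $O$'' observation.
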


\begin{proof}
First suppose, that $B_1 \neq p_1$. Then there is such $n$ that $B_1 \in S_1S_2^{n}(K)$ and $B_1\notin S_1S_2^{n+1}(K)$. Let $\lambda_k$ where $k=0,\ldots, n$ be the arc in $K$ whose endpoints are $S_1S_2^k(O)$ and $O$. Let $\lambda'$ be the arc with endpoints $O$ and $S_2(O)$. Since each arc, connecting $O$ and $S_1S_2^k(x)$ where $x\in K\cap[0,p_1]$, is a sum of subarcs $\la_k$ and $S_1S_2^k(\ga_{Ox})$ then allowing some liberty of notation we can represent the set of arcs $\{\ga_{Ox} : x\in S_1S_2^k(K\cap[0,p_1])\}$ as $\lambda_k+S_1S^k_2(M)$. The same way we get the representation of the form $\lambda'+S_2(M)$ for the set of arcs $\{\ga_{Ox} : x\in S_2 (K\cap[0,p_1])\}$, $\lambda_n+S_1S^n_2(N)$ for the set of arcs $\{\ga_{Ox} : x\in S_1S_2^n(K\cap[1-p_2,1])\}$, and $\lambda'+S_2(N)$ for the set of arcs $\{\ga_{Ox}:x\in S_2(K\cap[1-p_2,1])\}$.

Therefore we can write the following system of representations for the sets $M$ and $N$.

\begin{equation}\begin{cases} \label{eq1}
\begin{split}  M=(\lambda_0+S_1(M))\cup  (\lambda_1+&S_1S_2(M)\cup  \ldots \\ &\cup (\lambda_{n}+S_1S_2^{n}(M)) \cup (\lambda_{n}+S_1S_2^{n}(N))\end{split}\\
N=(\lambda'+S_2(M))\cup (\lambda'+S_2(N))
\end{cases}\end{equation}

We denote $\ell_k=H^s(\lambda_k)$ and $\ell'=H^s(\la')$.

Therefore, defining the contraction linear maps  
$$\sa'(x)={\ell'}+p_2x,\quad\sa_0(x)=\ell_0+p_1x,\quad\ldots\quad,\  \sa_n(x)=\ell_n+p_1p_2^nx$$ 
in $\rr$, we come to the following graph-directed system of similarities $\eS_{\eM\eN}$ for the sets $\eM\subset\rr$ and $\eN\subset\rr$.
\begin{equation}\begin{cases} \label{eq2}
\eM=\sa_0( \eM )\cup  \sa_1(\eM) \cup  \ldots \cup \sa_{n}(\eM ) \cup  \sa_{n}(\eN )\\
\eN= \sa'(\eM )\cup \sa'(\eN)
\end{cases}\end{equation}

Let us evaluate the similarity dimension $d$ of the system $\eS_{\eM\eN}$. According to \cite{MW}, $d$ is the unique value of the parameter $t$, such that for certain $\mu,\nu>0$ the following equations hold.
\begin{equation}\begin{cases} \label{eq3}
\mu=p_1^t(1+p_2^t+\ldots +p_2^{nt})\mu+ p_1^tp_2^{nt}\nu\\
\nu=p_2^t\mu+p_2^t\nu
\end{cases}\end{equation}

Expressing $\nu=\dfrac{p_2^t\mu}{1-p_2^t}$ in the second equation and substituting to the first one, we get $(1-p_2^t)\mu=p_1^t$. Thus, $d$ is the unique solution of the equation $p_1^t+p_2^t=1$. Therefore, $d\le\dim_H(K'\cap[0,1])<\dim_H(K')<1$.

\hspace{-1em}\resizebox{.95    \textwidth}{!}{
 \begin{tikzpicture}[line cap=round,line join=round,>=triangle 45,x=9.25cm,y=9.25cm]
\node[anchor=south west,inner sep=0] at (0,-.227) {
   \includegraphics[width=.85\textwidth]{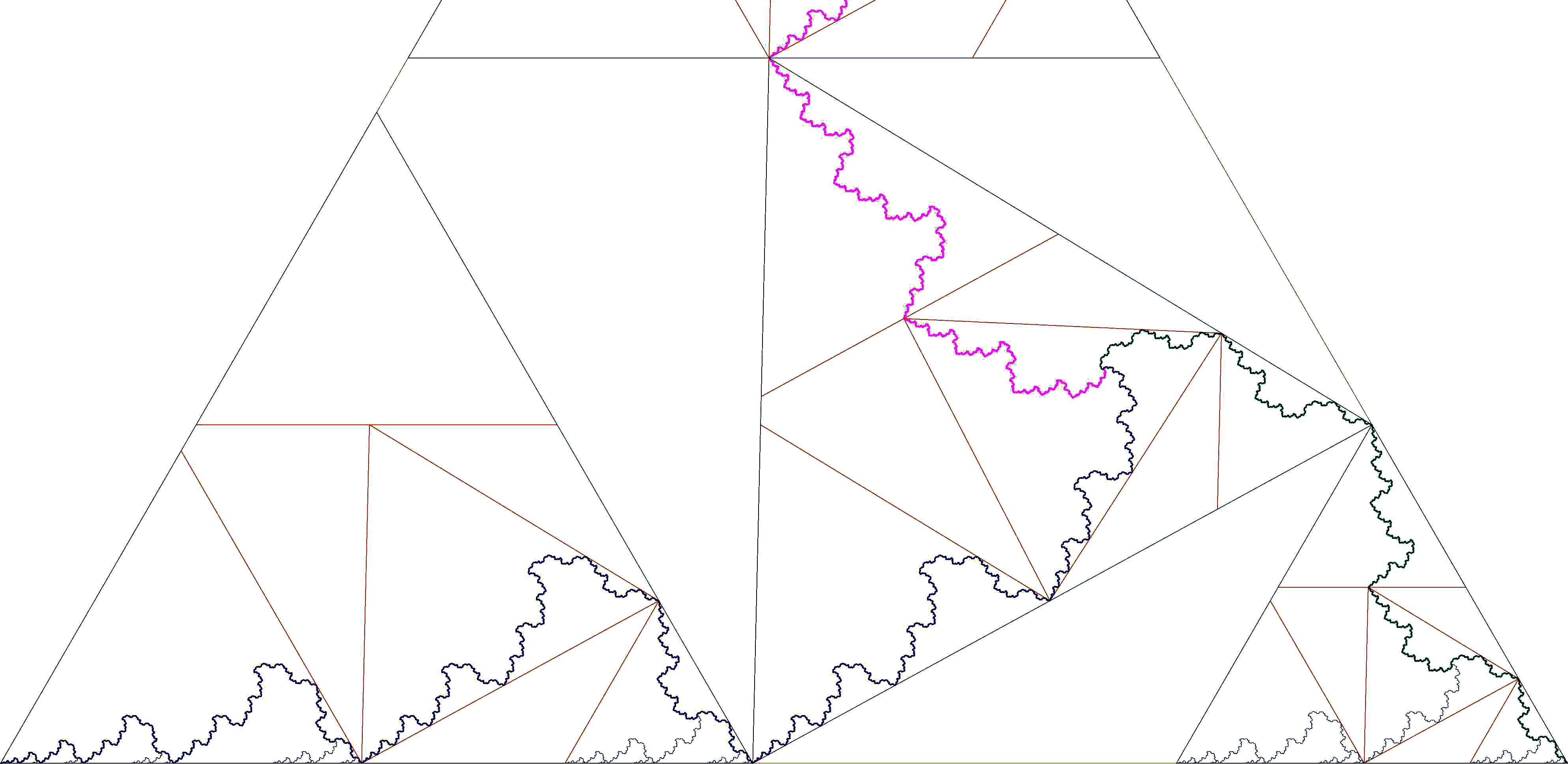}};
\draw (0.02,-0.2) node[anchor=north east] {$A_1 $};
\draw  (1.16,-0.2) node[anchor=north west] {$A_2 $};
\draw (1.02,0.05) node[anchor=north west] {$B_2$};
\draw  (0.5,0.33) node[anchor=north west] {$B_3$};
\draw  (0.76,0.05) node[anchor=south west] {$O$};
\draw  (0.52,-0.22) node[anchor=north west] {$B_1=p_1$};
\draw (.41,-.12) node [anchor=south east] {$O_1$};
\draw (1.02,-0.13) node[anchor=north west] {$O_2$};
\end{tikzpicture}}

In the case when $B_1=p_1$, denote by $M'$ the set of arcs $\ga_{A_2x}$ where $x\in K\cap[0,1]$. Notice that $M=\ga_{B_1O}+S_1(M')$, and each arc in $M'$ either belongs to $S_2(M')$ or is the sum of some subarc in $S_1(M')$ and the arc $\ga_{A_1B_1}\subset K$. Putting $\sa_1(x)=p_2x$, $\sa_2(x)=\ell_{A_2B_1}+p_1x$, $\sa_3(x)=\ell_{OS_2(B_1)}+p_1p_2x$ and $\sa_4(x)=\ell_{OO_2}+p_2x$, we get the following graph-directed system of similarities for $\eM'$ and $\eN$.
\begin{equation}\begin{cases} \label{eq4}
  \eM'=\sa_1( \eM' )\cup  \sa_2(\eM') \\
\eN= \sa_3(\eM' )\cup \sa_4(\eN )
\end{cases}\end{equation}
Thus, the set $\eM'$ is the attractor of the system $\{\sa_1,\sa_2\}$ whose similarity dimension $t$ satisfies the equation $p_1^t+p_2^t=1$.
\end{proof}

\bigskip

\end{document}